\theoremstyle{plain}
\newtheorem{theorem}{Theorem}[section]
\newtheorem{proposition}[theorem]{Proposition}
\newtheorem{definition}{Definition}
\newtheorem{remark}{Remark}
\newtheorem{lemma}[theorem]{Lemma}
\newtheorem{corollary}[theorem]{Corollary}
\newtheorem{theorema}{Theorem}
\begin{document}
	
	\title[Subcohomology and a Livsic Theorem for Zooming Systems]{Subcohomology and a Livsic Theorem \\ for Zooming Systems}

	\author[L. Mbarki]{Lamine Mbarki}
	\address{Lamine Mbarki, Mathematics Department, Faculty of Sciences Tunis, University of Tunis El Manar, Tunis, Tunisia} \email{mbarki.lamine2016@gmail.com}
	
	\author[E. Santana]{Eduardo Santana}
	\address{Eduardo Santana, Universidade Federal de Alagoas, 57200-000 Penedo, Brazil}
	\email{jemsmath@gmail.com}

	\thanks{The second author was partially supported by CNPq,   under the project with reference 409198/2021-8.} 
    \thanks{2020 Mathematics Subject Classification Primary: 37A05 Secondary: 37B99.}

	\date{\today}

	\maketitle
	
	\begin{abstract} 
		In the context of continuous zooming systems $f:M \to M$ on a compact metric space $M$, which include the non-uniformly expanding ones, possibly with the presence of a critical set, with the zooming set dense in $M$, we prove that any Hölder potential $\phi : M \to \mathbb{R}$ for which the integrals $\int \phi d\mu \geq 0$ with respect to any $f$-invariant probability $\mu$, admits a continuous function $\lambda_{0} : M \to \mathbb{R}$ (which can be Hölder if some integral is positive) such that
		\[
		\phi \geq \lambda_{0}- \lambda_{0} \circ f.
		\]
		This extends a result in \cite{BJ} for $C^{1}$-expanding maps on the circle $\mathbb{T} = \mathbb{R}/\mathbb{Z}$ to important classes of maps as uniformly expanding, local diffeomorphisms with non-uniform expansion, Viana maps, Benedicks-Carleson maps and Rovella maps. We also give an example beyond the exponential contractions context.
		
		Moreover, in the case of the integrals $\int \phi d\mu = 0$ with respect to any $f$-invariant probability $\mu$ and the set of periodic points to be dense in $M$, we obtain a version of the Livsic Theorem, that is, the functions $\lambda_{0}$ can be taken such that
		\[
		\phi = \lambda_{0}- \lambda_{0} \circ f.
		\]
		Additionally, we also prove that the measure which maximizes the integrals is unique for a residual set of potentials.
	\end{abstract}

	\bigskip



	\section{Introduction}
	
	Given a compact metric space $M$ and a continuous map $f:M \to M$, it is a trivial task to verify that for any $f$-invariant probability $\mu$ we have the integral $\int (\alpha - \alpha \circ f) d\mu = 0$ for any continuous function $\alpha:M \to \mathbb{R}$. Also, if $x \in M$ is a periodic point with period $n \in \mathbb{N}$, then the sum $\sum_{i=0}^{n-1} (\alpha - \alpha \circ f)(f^{i}(x))=0$. It is readily obtained from the previous remark, by considering the following $f$-invariant probability:
	\[
	\mu = \frac{1}{n}\sum_{i=0}^{n-1}\delta_{f^{i}(x)}.
	\]
	
	\bigskip
	
	The question about the converse of this fact leads to the well known Livsic Theorem (see \cite{HK}, Theorem 19.2.1), which states:
	
	\begin{theorem}[Livsic Theorem]\label{Livsic1}
		Let $M$ be a Riemannian manifold, $U \subset M$ open, $f:U \to M$ a smooth embedding, $\Lambda \subset U$ a compact topologically transitive hyperbolic set, and $\varphi: \Lambda \to \mathbb{R}$ Hölder continuous. Suppose that for every $x \in \Lambda$ such that $f^{n}(x) = x$ we have $\sum_{i=0}^{n-1}\varphi(f^{i}(x)) = 0$. Then there exists a continuous function $\Phi:\Lambda \to \mathbb{R}$ such that $\varphi = \Phi - \Phi \circ f$. Moreover, $\Phi$ is unique up to an additive constant and Hölder with the same exponent as $\varphi$.
	\end{theorem}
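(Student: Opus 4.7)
The plan is to build $\Phi$ first on a dense orbit, then extend by uniform continuity, with the vanishing-on-periodic-orbits hypothesis used to show the tentative definition on the dense orbit is Hölder with the same exponent as $\varphi$.

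First I would invoke topological transitivity to pick a point $x_{0}\in\Lambda$ whose forward orbit $\{f^{n}(x_{0})\}_{n\ge 0}$ is dense in $\Lambda$, and then define
\[
\Phi(f^{n}(x_{0})) \;=\; -\sum_{i=0}^{n-1}\varphi(f^{i}(x_{0})), \qquad n\ge 0,
\]
with $\Phi(x_{0})=0$. By construction $\Phi\circ f - \Phi = -\varphi$ holds along this orbit, so the only remaining task is to show that $\Phi$ extends continuously (in fact Hölder continuously) to $\Lambda$. Once that is done, passing to the limit shows the coboundary equation on all of $\Lambda$.

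The main obstacle, and really the heart of the proof, is the Hölder estimate
\[
|\Phi(f^{n}(x_{0}))-\Phi(f^{m}(x_{0}))| \;\le\; C\, d(f^{n}(x_{0}),f^{m}(x_{0}))^{\alpha}
\]
on the dense orbit. My plan here is to use the Anosov Closing Lemma for hyperbolic sets: if $y=f^{n}(x_{0})$ and $z=f^{m}(x_{0})$ with, say, $m>n$, are close, then setting $k=m-n$ we have $f^{k}(y)$ close to $y$, and the closing lemma produces a periodic point $p\in\Lambda$ of period $k$ whose orbit shadows $y,f(y),\dots,f^{k-1}(y)$ with exponentially small error controlled by $d(y,f^{k}(y))$. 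The hypothesis $\sum_{i=0}^{k-1}\varphi(f^{i}(p))=0$ together with the Hölder continuity of $\varphi$ and the geometric-series bound coming from hyperbolicity then give
\[
\Big|\sum_{i=0}^{k-1}\varphi(f^{i}(y))\Big| \;=\; \Big|\sum_{i=0}^{k-1}\bigl(\varphi(f^{i}(y))-\varphi(f^{i}(p))\bigr)\Big| \;\le\; C\, d(y,f^{k}(y))^{\alpha},
\]
which is exactly $|\Phi(y)-\Phi(f^{k}(y))|\le C\, d(y,z)^{\alpha}$.

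With the Hölder bound in hand, $\Phi$ is uniformly continuous on the dense orbit, so it extends uniquely to a continuous (Hölder, same exponent) function on $\Lambda$, and the coboundary identity extends by continuity. For uniqueness up to an additive constant, I would observe that if $\Phi_{1}$ and $\Phi_{2}$ both satisfy $\varphi=\Phi_{j}-\Phi_{j}\circ f$, then $\Phi_{1}-\Phi_{2}$ is continuous and $f$-invariant, hence constant on the closure of any dense orbit, i.e.\ on all of $\Lambda$.
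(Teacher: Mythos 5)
The paper does not prove Theorem~\ref{Livsic1}; it is quoted as classical background and attributed directly to \cite{HK}, Theorem~19.2.1, so there is no internal argument of the paper's to compare against. Your proposal is the standard proof of Livsic's theorem --- essentially the one in the cited reference --- and it is correct: choose $x_{0}$ with dense forward orbit, define $\Phi$ along that orbit by negative Birkhoff sums of $\varphi$, use the Anosov Closing Lemma to shadow each near-recurrence $y,f(y),\dots,f^{k}(y)\approx y$ by a genuine periodic orbit through $p\in\Lambda$ with $d(f^{i}(y),f^{i}(p))\le C(\lambda^{i}+\lambda^{k-i})\,d(y,f^{k}(y))$, subtract the vanishing periodic Birkhoff sum, and sum the resulting geometric series in $\lambda^{\alpha}$ against $\|\varphi\|_{\alpha}$ to get the H\"older modulus. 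Two details worth making explicit in a polished write-up: (i) the closing lemma applies only when $d(y,f^{k}(y))$ is below a fixed threshold $\delta_{0}$, so one first obtains a \emph{local} H\"older estimate on the orbit, from which boundedness of $\Phi$ and then the global H\"older bound of the continuous extension follow by compactness of $\Lambda$; and (ii) one needs the closing lemma in the form that places the shadowing periodic point inside $\Lambda$ itself, since the vanishing hypothesis is only assumed for periodic orbits in $\Lambda$ --- this is where the hyperbolic structure of $\Lambda$ (local product structure) enters, exactly as in the reference proof. Your uniqueness argument, that an $f$-invariant continuous function is constant on the closure of a dense orbit, is fine.
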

		
		There are variations of this classical result, as can be seen, for example, in the Introduction of \cite{BJ}. Livsic Theorem is stated as follows:

	    \begin{theorem}[Livsic Theorem]\label{Livsic2}
	    	Let $\mathbb{T} = \mathbb{R}/\mathbb{Z}$ and $T: \mathbb{T} \to \mathbb{T}$ be a $C^{\omega}$ expanding map. Let $f: \mathbb{T} \to \mathbb{R}$ be $C^{k}$ for some $k = 1,2,\dots, \infty$ (resp. $\beta$-Hölder for some $0< \beta \leq 1$), and suppose that $\int f d\mu = 0$ for all $T$-invariant probability $\mu$. Then there exists a $C^{k}$ (resp. $\beta$-Hölder) continuous function $\varphi$ such that $f = \varphi - \varphi \circ T$.
	    \end{theorem}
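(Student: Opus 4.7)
The plan is to establish Theorem \ref{Livsic2} along the classical Livsic strategy, which proceeds in three stages: constructing $\varphi$ on a dense forward orbit, extending it by uniform continuity to the whole circle, and finally bootstrapping its regularity.

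First I would observe that the hypothesis $\int f\, d\mu = 0$ for every $T$-invariant probability $\mu$ specializes, on periodic-orbit measures $\mu_p=\frac{1}{n}\sum_{i=0}^{n-1}\delta_{T^i p}$, to the identity $\sum_{i=0}^{n-1} f(T^i p)=0$ for every periodic point $p$ of period $n$. A $C^{\omega}$ expanding map on $\mathbb{T}$ is topologically mixing, so I pick a point $x_0$ with dense forward orbit $\{T^n x_0\}_{n\geq 0}$ and define
\[
\varphi(T^n x_0) := -\sum_{k=0}^{n-1} f(T^k x_0), \qquad n\geq 0,
\]
which by construction satisfies $f(T^n x_0)=\varphi(T^n x_0)-\varphi(T^{n+1} x_0)$ on the orbit.

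The heart of the argument is to show that $\varphi$ is H\"older continuous on this orbit. Given $T^m x_0$ and $T^n x_0$ with $n>m$ and $d(T^m x_0,T^n x_0)<\varepsilon$, the Anosov closing lemma for expanding maps (or, more elementarily, the contraction of inverse branches) produces a periodic point $p$ of period $n-m$ whose orbit exponentially $\varepsilon$-shadows the segment $T^m x_0,\dots,T^{n-1}x_0$. Combining the vanishing of the Birkhoff sum along the orbit of $p$ with the $\beta$-H\"older modulus of $f$ and the resulting geometric series, I would obtain
\[
\bigl|\varphi(T^m x_0)-\varphi(T^n x_0)\bigr|=\Bigl|\sum_{k=m}^{n-1} f(T^k x_0)\Bigr|\leq C\, d(T^m x_0,T^n x_0)^{\beta}.
\]
This makes $\varphi$ uniformly $\beta$-H\"older on its domain of definition, hence it extends uniquely to a $\beta$-H\"older function on $\mathbb{T}$, and the cohomological identity $f=\varphi-\varphi\circ T$ propagates from the dense orbit to all of $\mathbb{T}$ by continuity. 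Uniqueness up to an additive constant is immediate from topological transitivity.

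The third step is the regularity upgrade. I would exploit the contraction of inverse branches: for each inverse branch $S$ of $T$, the cohomological equation rewrites as $\varphi\circ S=\varphi-f\circ S$, and iterating along sequences $S_{i_1},S_{i_2},\dots$ expresses differences $\varphi(y)-\varphi(y')$ as telescoping series in which the general term involves $f$ evaluated at points that approach one another at a uniform geometric rate. Differentiating formally and checking termwise convergence --- legitimate because the derivatives of $S_{i_k}\circ\cdots\circ S_{i_1}$ decay geometrically for an expanding map --- upgrades $\varphi$ from H\"older to $C^{k}$ when $f$ is $C^{k}$. The analytic case proceeds in the same spirit by complexifying $T$ as an expanding holomorphic map on a neighbourhood of $\mathbb{T}$ in $\mathbb{C}/\mathbb{Z}$ and verifying that the inverse-branch series converges uniformly on this complex neighbourhood. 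The main obstacle, in my view, lies precisely in this last regularity bootstrap: the H\"older conclusion is almost immediate from closing, but passing to $C^{k}$ and especially to $C^{\omega}$ requires either a spectral analysis of the transfer operator or the careful estimates on composed inverse branches just sketched, and the analytic case additionally demands control of the domain of holomorphy on which the series converges.
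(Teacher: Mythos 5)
The paper does not prove this statement: \Cref{Livsic2} is quoted as background, verbatim from the introduction of \cite{BJ}, and the authors use it only to motivate the ``$\geq$'' version (the theorem of Conze--Guivarc'h, Savchenko, Bousch, Contreras--Lopes--Thieullen) that they actually generalize. So there is no in-paper proof to compare yours against; I will instead assess your argument on its own terms.

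Your outline is the standard Livsic strategy and the H\"older part of it is essentially sound. Two small corrections: the cohomological equation pushed through an inverse branch $S$ gives $\varphi\circ S=\varphi+f\circ S$, not $\varphi - f\circ S$ (with your sign the telescoping goes the wrong way); and the theorem does not actually claim that $f\in C^{\omega}$ yields $\varphi\in C^{\omega}$ --- the analyticity hypothesis is placed on $T$ alone, and the conclusion tracks the $C^{k}$ or H\"older class of $f$. Your paragraph on ``complexifying $T$'' is therefore addressing a case the statement does not assert. The real gap is the one you already flag yourself: the $C^{k}$ bootstrap. For the H\"older case, the closing-lemma estimate $\bigl|\sum_{k=m}^{n-1} f(T^{k}x_{0})\bigr|\le C\,d(T^{m}x_{0},T^{n}x_{0})^{\beta}$ genuinely finishes the proof, because H\"older regularity is exactly what the periodic shadowing estimate produces. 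But the passage from H\"older to $C^{k}$ is not a routine continuation of that estimate. Writing $\varphi(y)-\varphi(y')$ as a series over composed inverse branches and ``differentiating formally'' requires control not just of the geometric decay of $\|(S_{i_{k}}\circ\cdots\circ S_{i_{1}})'\|_{\infty}$ but of all derivatives up to order $k$ of these compositions, uniformly in the branch sequence, together with an argument that the termwise-differentiated series still converges to the derivative of $\varphi$; this is where the $C^{\omega}$ (or at least high) regularity of $T$ and, in most written proofs, a spectral-gap argument for the transfer operator on $C^{k}$ actually enter. As it stands, your step three is a plausible plan rather than a proof, and since the theorem's whole point beyond the H\"older case is precisely this regularity upgrade, that step needs to be carried out in detail before the argument is complete.
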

	
	Also in \cite{BJ}, Theorem A, the following related result appears:
	
	\begin{theorem}
		Let $T:\mathbb{T} \to \mathbb{T}$ be a $C^{1}$ expanding map. Let $f : \mathbb{T} \to \mathbb{R}$ be a continuous $\beta$-Hölder function for some $0 <\beta \leq 1$, and suppose that $\int f d\mu \geq 0$ for all $T$-invariant probability $\mu$. Then there exists a $\beta$-Hölder function $\varphi : \mathbb{T} \to \mathbb{R}$ such that $f \geq \varphi - \varphi \circ T$.
	\end{theorem}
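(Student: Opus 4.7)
The plan is to construct $\varphi$ explicitly as a Bousch-type sub-action: set
\[
\varphi(x) := -\inf\bigl\{ S_n f(y) : n \geq 0,\ y \in \mathbb{T},\ T^n y = x \bigr\},
\]
where $S_n f(y) = \sum_{i=0}^{n-1} f(T^i y)$ and $S_0 f \equiv 0$. The pair $(n,y) = (0,x)$ is admissible, so $\varphi(x) \geq 0$ for every $x$. The subcohomology inequality then drops out formally from the definition: for any admissible pair $(n, y)$ witnessing $\varphi(x)$, the pair $(n+1, y)$ satisfies $T^{n+1} y = Tx$ with $S_{n+1} f(y) = S_n f(y) + f(x)$, so taking the infimum over admissible pairs for $Tx$ gives $-\varphi(Tx) \leq -\varphi(x) + f(x)$, i.e.\ $f \geq \varphi - \varphi \circ T$. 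Thus everything reduces to two analytic tasks: proving that the infimum is bounded below, so $\varphi$ is a finite real-valued function; and proving that $\varphi$ is $\beta$-H\"older.

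Showing $\varphi<\infty$ is a Ma\~n\'e-type closing lemma, and I expect this to be the main obstacle, since it is the only step where the sign hypothesis $\int f\, d\mu \geq 0$ is genuinely used. Given any $y$ with $T^n y = x$, I would use that a $C^1$ expanding map on $\mathbb{T}$ admits a Markov partition into $d = \deg(T)$ arcs each mapped bijectively onto $\mathbb{T}$, and associate to $y$ its itinerary $(b_0,\dots,b_{n-1})$. Repeating this itinerary periodically produces a periodic point $p$ of period $n$ lying in the same $n$-th inverse branch as $y$, and uniform expansion $\min |T'| \geq \lambda > 1$ forces $d(T^i y, T^i p) \leq C \lambda^{-(n-i)}$ for $0 \leq i < n$. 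The $\beta$-H\"older estimate for $f$ combined with a geometric sum yields
\[
|S_n f(y) - S_n f(p)| \leq K_0 := \frac{C^\beta |f|_\beta}{1 - \lambda^{-\beta}},
\]
a bound independent of $n$. Because $p$ is $n$-periodic, the hypothesis applied to the normalized orbital measure $\mu_p$ gives $S_n f(p) = n\int f\, d\mu_p \geq 0$, whence $S_n f(y) \geq -K_0$ and $\varphi(x) \leq K_0$ uniformly in $x$.

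The $\beta$-H\"older regularity of $\varphi$ is then a standard bounded-distortion calculation. For $x, x' \in \mathbb{T}$ lying in a common small ball, each admissible pair $(n, y)$ for $x$ has a companion pair $(n, y')$ for $x'$ in the same inverse branch of $T^n$, satisfying $d(T^i y, T^i y') \leq \lambda^{-(n-i)} d(x, x')$; the same kind of geometric sum gives $|S_n f(y) - S_n f(y')| \leq K_1\, d(x, x')^\beta$ uniformly in $n$, and passing to the infimum yields $|\varphi(x) - \varphi(x')| \leq K_1\, d(x, x')^\beta$. The bound extends to arbitrary distances by compactness of $\mathbb{T}$ together with the uniform boundedness of $\varphi$ obtained in the previous step. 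The genuinely delicate point in the whole argument is the Ma\~n\'e lemma in the second paragraph: it is where hyperbolicity, in the guise of closing by periodic orbits together with bounded distortion of $f$ along inverse branches, interacts with the sign hypothesis on $\int f\, d\mu$. This is precisely the step that must be reworked when the paper extends the result from $C^1$ expanding maps to the non-uniformly expanding zooming setting.
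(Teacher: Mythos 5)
Your proof is correct, and it is essentially the classical Conze--Guivarc'h/Bousch/CLT argument that the paper explicitly cites as the known proof of this statement (the paper attributes it to \cite{CG}, \cite{S}, \cite{Bo1}, \cite{Bo2}, \cite{CLT} and does not reprove it; its contribution is the generalization to zooming maps in Theorem~A of the paper). Your construction $\varphi(x)=-\inf\{S_nf(y):T^ny=x\}$, the formal derivation of $f\ge\varphi-\varphi\circ T$, the Ma\~n\'e-type closing lemma via periodic-orbit shadowing, and the bounded-distortion computation of H\"older regularity are exactly the CLT, Proposition~11, route. You also correctly identify the closing-lemma step as the one that fails to transfer directly to the zooming setting. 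It is worth noting how the paper's generalization sidesteps it: instead of shadowing by periodic orbits (which would require specification or a Markov structure, not available for general zooming maps), the paper uses topological exactness and a Zorn's-lemma argument (Lemma~3.1) to extract, inside each zooming pre-ball, a compact $K$ on which some $f^k$ is a homeomorphism onto all of $M$; the infimum is then taken over these distinguished pre-images rather than over all inverse branches, and the role your $\int f\,d\mu_p\ge 0$ bound plays is taken over by normalizing $\varphi=\phi-S(\phi,f)$. Also, your argument yields the $\beta$-H\"older sub-action directly from the geometric-series estimate; the paper instead first produces a continuous sub-action and then appeals to density of H\"older functions in $C^0$, a step that in fact has slack only when $S(\phi,f)>0$, so your direct H\"older estimate is cleaner. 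What you lose relative to the paper is generality: the periodic-orbit closing step is genuinely tied to uniform expansion and a Markov partition of $\mathbb{T}$, which is why the paper must replace it.
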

	
	    This theorem has been stated and proved independently by several authors. It first appears in an unpublished manuscript by Conze-Guivarc'h in \cite{CG}, where it is proved using thermodynamic formalism; the same approach is used by Savchenko in \cite{S}. More direct proofs, which do no use the Ruelle transfer operator, can be found in \cite{Bo1}, \cite{Bo2} and \cite{CLT}.
	
	    In this work, we extend this result to the context of zooming systems, which includes the non-uniformly expanding maps, by using techniques that can be seen in \cite{CLT}, Proposition 11. Our approach is constructing a continuous potential on the dense zooming set and we are able to extend such a potential to the whole space. After that, by denseness of the Hölder functions, we obtain the result. In the case of all integrals to be zero, we obtain a version of the Livsic Theorem. We also prove that the maximizing measure is unique for a residual set of continuous potentials.
	
	    Our work is organised as follows. In section 2, we begin by giving some preliminaries and notations that will be useful for the remainder of the work and stating our main result. In section 3, we proceed with the proof of our first main result and the proof of our second main result in section 4. In section 5, we finish our paper by giving some applications. We stress that our result extends to several important classes of examples beyond expanding maps on the circle.
	
	\bigskip
	
	
	From now and on, we proceed with definitions and statements. We begin by defining zooming systems as can be seen in \cite{Pi1}. It also can be seen in \cite{Sa}.
	
	\vspace{1 cm}
	
	\section{Preliminaries and Main Results}

	\subsection{Zooming Systems}The zooming times generalize hyperbolic times beyond the exponential context. Details can be seen in \cite{Pi1}. Let $f : M \to M$ be a Borel measurable map defined on a connected, compact, metric space $M$.
	
	\begin{definition}
		(Zooming contractions). A \textbf{\textit{zooming contraction}} is a sequence of functions $\alpha_{n}: [0,+\infty) \to [0,+\infty)$ such that
		
		\begin{itemize}
			\item $\alpha_{n}(r) < r, \text{for all} \, \, n \in \mathbb{N}, \text{for all} \, \, r>0.$
			
			\item $\alpha_{n}(r)<\alpha_{n}(s), \,\, if \,\, 0<r<s, \text{for all} \, \, n \in \mathbb{N}$.
			
			\item $\alpha_{m} \circ \alpha_{n}(r) \leq \alpha_{m+n}(r), \text{for all} \, \, r>0, \text{for all} \, \, m,n \in \mathbb{N}$.
			
			\item $\displaystyle \sup_{r \in (0,1)} \sum_{n=1}^{\infty}\alpha_{n}(r) < \infty$.
		\end{itemize}
		
	\end{definition}
	
	We have special types of zooming contractions. As defined in \cite{PV}, we call the contraction $(\alpha_{n})_{n}$ \textbf{\textit{exponential}} if $\alpha_{n}(r) = e^{-\lambda n} r$ for some $\lambda > 0$ and \textbf{\textit{Lipschitz}} if $\alpha_{n}(r) = a_{n} r$ with $0 \leq a_{n} < 1, a_{m}a_{n} \leq a_{m+n}$ and $\sum_{n=1}^{\infty} a_{n} < \infty$. In particular, every exponential contraction is Lipschitz. We can also have the example with $a_{n} = (n+b)^{-a}, a > 1, b>0$.

	\begin{definition}
		(Zooming times). Let $\alpha = (\alpha_{n})_{n}$ be a zooming contraction and $\delta>0$. We say that $n \in \mathbb{N}$ is an $(\alpha,\delta)$\textbf{\textit{-zooming time}} for $p \in X$ if there exists 
		a neighbourhood $V_{n}(p)$ of $p$ such that
		
		\begin{itemize}
			\item $f^{n}$ sends $\overline{V_{n}(p)}$ homeomorphically onto $\overline{B_{\delta}(f^{n}(p))}$;
			
			\item $d(f^{j}(x),f^{j}(y)) \leq \alpha_{n - j}(d(f^{n}(x),f^{n}(y)))$ for every $x,y \in V_{n}(p)$ and every $0 \leq j < n$.
		\end{itemize}
		
		We call $B_{\delta}(f^{n}(p))$ a \textbf{\textit{zooming ball}} and $V_{n}(p)$ a  \textbf{\textit{zooming pre-ball}}.
	\end{definition}
	
	We denote by $Z_{n}(\alpha,\delta,f)$ the set of points in $M$ for which $n$ is an $(\alpha, \delta)$- zooming time.
	
	\begin{definition}
		(Zooming measure) A $f$-non-singular finite measure $\mu$ defined on the Borel sets of M is called an $(\alpha,\delta)$-\textbf{\textit{weak zooming measure}} if $\mu$ almost every point has 
		infinitely many $(\alpha, \delta)$-zooming times. A weak zooming measure is called an $(\alpha,\delta)$-\textbf{\textit{zooming measure}} if
	\end{definition}
	\[
	\displaystyle d(x):= \limsup_{n \to \infty} \frac{1}{n} \{1 \leq j \leq n \mid x \in Z_{j}(\alpha,\delta,f)\} > 0,
	\]
	$\mu$-almost every $x \in M$.  
	
	\begin{definition}
		(Zooming set) Given an $(\alpha,\delta)$-zooming measure, we say that a forward invariant set $\Lambda \subset M$ such that $\mu(\Lambda)=1$ is an $(\alpha,\delta)$-\textbf{\textit{zooming set}} if $d(x) > 0$ holds for every $x \in \Lambda$. 
	\end{definition}
	\begin{remark}
		When it is clear which pair $(\alpha,\delta)$ we are taking, we call them simply a \textbf{\textit{zooming measure}} and a \textbf{\textit{zooming set}} 
	\end{remark}
	
	\begin{definition}
		(Bounded distortion) Given an $f$-non-singular finite measure $\mu$ defined on the Borel sets of $M$ with a jacobian $J_{\mu}f$, we say that the measure has \textbf{\textit{bounded distortion}} if there exists $\rho > 0$ such that
		\[
		\bigg{|}\log \frac{J_{\mu}f(y)}{J_{\mu}f(z)} \bigg{|} \leq \rho d(f^{n}(y),f^{n}(z)),
		\]
		$\mu$-almost every $y,z \in V_{n}(x)$, $\mu$-almost everywhere $x \in M$, for every zooming time $n$ of $x$.
	\end{definition}
	
	The map $f$ with an associated zooming measure is called a \textbf{\textit{zooming system}}. Every non-uniformly expanding map as considered in \cite{AOS}, for example, is a zooming system. Theorem C in \cite{Pi1} guarantees the existence of invariant probabilities for a zooming system when the reference zooming measure has bounded distortion. We denote the set of all $f$-invariant Borel probability measures as $\mathcal{M}_{f}^{1}(M)$. 
	
	Now, we state our main results. The following is similar to  Theorem A in \cite{BJ}. It is also somehow related to Theorem 11 in \cite{CLT}.
	
	\begin{theorema}\label{A}
		Let $f:M \to M$ be a continuous zooming system  with the zooming set $\Lambda$ dense on $M$. Given a $\beta$-Hölder continuous potential  $\phi:M \to \mathbb{R}$ such that
		 \[
		 \displaystyle m(\phi,f) : = \min_{\eta \in \mathcal{M}_{f}^{1}(M)} \bigg{\{}\int \phi d\eta\bigg{\}} \geq 0,
		 \]
		 then there exists a continuous function $\lambda_{0}: M \to \mathbb{R}$ such that
		 \[
		 \phi \geq \lambda_{0} - \lambda_{0} \circ f.
		 \] 
		 If $m(\phi,f) > 0$ then $\lambda_{0}$ can be taken $\gamma$-Hölder for some $\gamma$.
		 
		 In the case of all the integrals to be zero and the set periodic points dense in $M$, we obtain a version of the Livsic Theorem, that is, the function $\lambda_{0}$ can be taken such that
		 \[
		 \phi = \lambda_{0} - \lambda_{0} \circ f.
		 \]
	\end{theorema}
	
	The following result is similar to Proposition 10 in \cite{CLT}. Denote by $\mathcal{C}^{0}(M,\mathbb{R})$ the set of continuous functions $\phi: M \to \mathbb{R}$ with norm $\parallel \cdot \parallel_{\infty}$.
	
	\begin{theorema}\label{B}
		Let $\mathcal{K}$ be a compact convex subset of the set of probability measures on $M$ and $(\mathcal{H}, \parallel \cdot \parallel_{\mathcal{H}})$ be a dense Banach space in $\mathcal{C}^{0}(M, \mathbb{R})$ which embeds continuously in
		$\mathcal{C}^{0}(M, \mathbb{R})$. Then there exists a residual set $\mathcal{R}$ in $\mathcal{H}$ (for the  $\parallel \cdot \parallel_{\mathcal{H}}$-topology) such that, for
		all $\phi \in \mathcal{R}$, if
		\[
		m(\phi) := \max \bigg{\{} \int \phi d \eta \bigg{|} \eta \in \mathcal{K} \bigg{\}} \,\, \text{and} \,\, \mathcal{M}(\phi) : = \bigg{\{} \eta \in \mathcal{K} \bigg{|} \int \phi d \eta \bigg{|} \eta = m(\phi) \bigg{\}},
		\]
		then $\mathcal{M}(\phi)$ contains a unique measure.
	\end{theorema}

	\section{Proof of Theorem \ref{A}}
	
	Let $\mathcal{C}^{\beta}$ denote the space of $\beta$-Hölder continuous functions $\varphi: M \to \mathbb{R}$. We define the following norm on $\mathcal{C}^{\beta}$:
	\[
	\parallel \varphi \parallel_{\beta} : = \sup_{x,y \in M, x \neq y} \bigg{\{} \frac{|\varphi(x) - \varphi(y)|}{d(x,y)^{\beta}}\bigg{\}} < \infty.
	\]
   	
   	The following Lemma proves Theorem \ref{A}.
   	
	\begin{lemma}\label{preball}
		Given a $\beta$-Hölder function $\phi: M \to \mathbb{R}$ and $\mathcal{Z}$ the collection of all zooming pre-balls $V:= V_{n}(z)$, with zooming ball $B:= f^{n}(V) = B_{\delta}(f^{n}(z))$, there exists a continuous function $\lambda : M \to \mathbb{R}$ such that
		\[
		\phi  \geq \lambda \circ f - \lambda + m(\phi,f)  \,\, \text{i.e.} \,\, \phi \geq \overline{\lambda} - \overline{\lambda} \circ f,
		\]
		where $\overline{\lambda} = -\lambda$. As a consequence, if $m(\phi,f) > 0$, there exists a $\gamma$-Hölder function  $\lambda_{0} : M \to \mathbb{R}$ for some $\gamma$ such that $\phi \geq \lambda_{0} - \lambda_{0} \circ f$.
	\end{lemma}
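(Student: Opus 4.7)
Following the authors' outline and using Lemma \ref{HOM}, my plan breaks into three stages.

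First, I would construct the inverse branches. For each zooming pre-ball $V = V_n(x) \in \mathcal{Z}$, the zooming property gives a homeomorphism $f^n|_{\overline{V}} : \overline{V} \to \overline{B_\delta(f^n(x))}$. Applying Lemma \ref{HOM} to the open zooming ball $B := B_\delta(f^n(x))$ yields $k_V \in \mathbb{N}$ and a compact $K_V \subset B$ with $f^{k_V}|_{K_V} : K_V \to M$ a homeomorphism. The composition
\[
\Psi_V := (f^n|_{\overline{V}})^{-1} \circ (f^{k_V}|_{K_V})^{-1} : M \to \overline{V}
\]
is continuous and satisfies $f^{N_V}\circ\Psi_V = \mathrm{id}_M$ with $N_V := n + k_V$.

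Second, I would define the candidate coboundary
\[
\lambda(y) := \inf_{V \in \mathcal{Z}} \bigl\{ S_{N_V}\phi(\Psi_V(y)) - N_V\, S(\phi,f) \bigr\},
\]
writing $S_N\phi(\xi) := \sum_{i=0}^{N-1}\phi(f^i\xi)$. The coboundary inequality then follows from a pre-image extension: since $\Psi_V(y)$ is automatically a pre-image of $f(y)$ at depth $N_V + 1$, one gets (provided the extension is admissible as an infimum element, possibly after enlarging $\mathcal{Z}$ to the closure under finite compositions of the $\Psi_V$)
\[
\lambda(f(y)) \leq \bigl[S_{N_V}\phi(\Psi_V(y)) - N_V\, S(\phi,f)\bigr] + \phi(y) - S(\phi,f),
\]
and taking the infimum over $V$ yields $\lambda(f(y)) - \lambda(y) \leq \phi(y) - S(\phi,f)$, i.e., $\phi \geq \lambda\circ f - \lambda + S(\phi,f)$. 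Setting $\overline{\lambda} := -\lambda$ and using $S(\phi,f) \geq 0$ gives the stated $\phi \geq \overline{\lambda} - \overline{\lambda}\circ f$.

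Third, for $\lambda$ to be finite and continuous, the essential input is the bounded-distortion estimate coming from the zooming contraction with summable exponent: for any $\xi, \xi' \in V_n(x)$, the Lipschitz contractions give $d(f^j\xi, f^j\xi') \leq a_{n-j}\delta$, hence
\[
\bigl|S_n\phi(\xi) - S_n\phi(\xi')\bigr| \leq \|\phi\|_\alpha\,\delta^\alpha \sum_{i=1}^{\infty} a_i^\alpha,
\]
uniformly in $V$ thanks to the hypothesis $\sum a_i^\alpha < \infty$. Combined with the variational principle and a periodic-orbit/shadowing construction inside zooming pre-balls granted by topological exactness, this should produce both a uniform lower bound for $S_{N_V}\phi(\Psi_V(y)) - N_V S(\phi,f)$ (ensuring $\lambda > -\infty$) and equicontinuity of the family in $y$ (ensuring continuity of the infimum). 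Finally, density of $\mathcal{C}^\alpha$ in $C(M)$ in $\|\cdot\|_\infty$ allows approximating $\overline{\lambda}$ by a Hölder $\lambda_0$; the slack $S(\phi,f) \geq 0$ in $\phi \geq \overline{\lambda} - \overline{\lambda}\circ f + S(\phi,f)$ absorbs the sup-norm approximation error to leave $\phi \geq \lambda_0 - \lambda_0\circ f$ intact (the borderline case $S(\phi,f) = 0$ handled by perturbing $\phi$ by a small positive constant).

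The main obstacle I foresee is in the third stage: uniform control of the tail iterates $S_{k_V}\phi\circ\Psi_V$ along the Lemma \ref{HOM}-extension. The zooming-distortion estimate governs only the first $n$ iterates of the $\Psi_V(y)$-orbit; the remaining $k_V$ iterates live in $K_V \subset B$, where no quantitative contraction is available, and $k_V$ may be unbounded as $V$ varies. Overcoming this will likely require either restricting the infimum to a subfamily of zooming pre-balls for which $k_V$ is controlled, or a shadowing argument comparing $S_{N_V}\phi(\Psi_V(y))$ to the Birkhoff sum along a nearby periodic orbit, which the variational principle bounds automatically by $N_V S(\phi,f)$ up to distortion error.
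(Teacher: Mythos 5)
Your construction differs from the paper's in a substantive way, and in a way that repairs a logical problem in the paper's own proof. The paper applies Lemma~\ref{HOM} to the pre-ball $V$ itself, obtaining $K_V\subset V$ and $g_V=f^{k_V}|_{K_V}:K_V\to M$, and sets $\lambda(x)=\inf_{V\in\mathcal{Z}} S_n\varphi(g_V^{-1}(x))$ with $n$ the zooming time of $V$. With that choice, $g_V^{-1}(x)$ is a pre-image of $x$ under $f^{k_V}$, not under $f^n$, so the Birkhoff sum $S_n\varphi(g_V^{-1}(x))$ does \emph{not} terminate at $x$, and the paper's assertion that ``the property of infimum implies $\lambda\circ f\leq\varphi+\lambda$'' is not actually backed by the pre-image-extension argument of \cite{CLT}, Proposition~11. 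Your version, which applies Lemma~\ref{HOM} to the zooming ball $B$ and composes with $(f^n|_{\overline V})^{-1}$ to get a genuine inverse branch $\Psi_V$ with $f^{N_V}\circ\Psi_V=\mathrm{id}_M$, is precisely what that argument requires, and the coboundary inequality then follows cleanly as you indicate. This is a real improvement, not a cosmetic variant.

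The difficulty you flag in your third stage is genuine, and the paper's proof does not resolve it either. The zooming contraction controls only the first $n$ iterates of $\Psi_V(y)$; the remaining $k_V$ iterates pass through $K_V\subset B$ with no quantitative contraction, and $k_V$ is not bounded over $\mathcal{Z}$. Consequently (i) the family $y\mapsto S_{N_V}\phi(\Psi_V(y))-N_V S(\phi,f)$ is not obviously equicontinuous -- the paper's displayed H\"older estimate only shows each individual $S_n\varphi\circ g_V^{-1}$ is continuous, with a modulus governed by $f^n\circ g_V^{-1}$ that changes with $V$, which is insufficient for continuity of the infimum -- and (ii) it is not shown that the infimum is bounded below. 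Your proposed remedies (restricting to a subfamily with controlled $k_V$, or a shadowing/variational-principle comparison) are the right places to look, but they are not carried out, so the argument remains incomplete at exactly the point you diagnose. Likewise, your closing observation about the borderline case $S(\phi,f)=0$ is a real issue: with zero slack the sup-norm density of H\"older functions does not by itself yield $\phi\geq\lambda_0-\lambda_0\circ f$, and replacing $\phi$ by $\phi+\epsilon$ changes $\lambda$ as well, so one still needs a limit argument in $\epsilon$ that neither you nor the paper supplies.
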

	\begin{proof}
		We are assuming the zooming set $\Lambda$ dense in $M$, let $z \in \Lambda$ and $V_{n}(z) \in \mathcal{Z}$. We  consider $\varphi: = -\phi  + m(\phi,f)$. Taking $\theta < 1, \theta \approx 1$, define $\lambda : \Lambda \to \mathbb{R}$ as follows. 
		\[
		\lambda(x): = \sum_{i=0}^{\infty}\theta^{i}|\varphi(f^{i}(x))|.
		\]
		Given $x,y \in V_{n}(z)$, we have
		\[
		d(f^{i}(x),f^{i}(y)) \leq \alpha_{n-i}(d(f^{n}(x),f^{n}(y))), 0 \leq i \leq n
		\]
		So,
		\[
	    \sum_{i=0}^{n-1}\theta^{i}||\varphi(f^{i}(x))| - |\varphi(f^{i}(y))|| \leq  \sum_{i=0}^{n-1} \frac{||\varphi(f^{i}(x))| - |\varphi(f^{i}(y))||}{d(f^{i}(x),f^{i}(y))^{\beta}} \theta^{i}d(f^{i}(x),f^{i}(y))^{\beta} \leq 
	    \]
	    \[
	    \sum_{i=0}^{n-1} \parallel \varphi \parallel_{\beta}  \theta^{i}\alpha_{n-i}(d(f^{n}(x),f^{n}(y)))^{\beta} 
		< \parallel \varphi \parallel_{\beta}\sum_{i=0}^{n-1}  \theta^{i}d(f^{n}(x),f^{n}(y))^{\beta}.
		\]
		We used the property of zooming contractions that $\alpha_{n-i}(r)< r,  r > 0$. It holds that
		\[
		|\lambda(x) - \lambda(y)|=\bigg{|}\sum_{i=0}^{\infty}\theta^{i}|\varphi(x)| - \sum_{i=0}^{\infty}\theta^{i}|\varphi(y)|\bigg{|} \leq  \sum_{i=0}^{\infty}\theta^{i}||\varphi(f^{i}(x))| - |\varphi(f^{i}(y))|| = 
		\]
		\[
		\sum_{i=0}^{n-1}\theta^{i}||\varphi(f^{i}(x))| - |\varphi(f^{i}(y))|| + \sum_{i=n}^{\infty}\theta^{i}||\varphi(f^{i}(x))| - |\varphi(f^{i}(y))|| \leq 
		\]
		\[
		\parallel \varphi \parallel_{\beta}\sum_{i=0}^{
			n - 1}  \theta^{i}  d(f^{n}(x),f^{n}(y))^{\beta} + \sum_{i=n}^{\infty}\theta^{i}||\varphi(f^{i}(x))| - |\varphi(f^{i}(y))||
		\]
		Given $\epsilon > 0$ we can find $n_{0}$ such that 
		\[
		\sum_{i=n_{0}}^{\infty}\theta^{i}||\varphi(f^{i}(x))| - |\varphi(f^{i}(y))|| < \epsilon/2, 
		\]
		for every $x,y \in M$. Also, by uniform continuity of $f$, we can obtain
		\[
		\parallel \varphi \parallel_{\beta}\sum_{i=0}^{n_{0} - 1}  \theta^{i}  d(f^{n_{0}}(x),f^{n_{0}}(y))^{\beta} < \epsilon/2, 
		\]
		if $x$ and $y$ are close enough. It implies that $\lambda$ is continuous. Taking $\theta$ close enough to $1$, we have that $\lambda + \delta \geq \varphi + \lambda \circ f$ for $m(\phi,f) > \delta > 0$ because
		\[
		\lambda(f(x)) - \lambda(x) = \sum_{i=0}^{\infty}\theta^{i}|\varphi(f^{i}(f(x)))| - \sum_{i=0}^{\infty}\theta^{i}|\varphi(f^{i}(x))| = (1/\theta)\sum_{i=0}^{\infty}\theta^{i + 1}|\varphi(f^{i+1}(x))| - \sum_{i=0}^{\infty}\theta^{i}|\varphi(f^{i}(x))| = 
		\]
		\[
		(1/\theta)\sum_{i=1}^{\infty}\theta^{i}|\varphi(f^{i}(x))| - \sum_{i=0}^{\infty}\theta^{i}|\varphi(f^{i}(x))| = ((1/\theta)-1)\sum_{i=1}^{\infty}\theta^{i}|\varphi(f^{i}(x))| - |\varphi(x)| \leq -\varphi(x) + \delta
		\] 
		which implies that
		\[
		\varphi = -\phi + m(\phi,f) \leq  \lambda - \lambda 
		\circ f + \delta \implies \phi \geq \phi - m(\phi,f) + \delta \geq (-\lambda) - (-\lambda) \circ f,
		\]
		and we obtain $\phi \geq \overline{\lambda} - \overline{\lambda} \circ f$. The function $\overline{\lambda}$ is uniformly continuous on the dense set $\Lambda$ and we can extend it to the whole compact space $M$ and still have the inequality $\phi \geq \overline{\lambda} - \overline{\lambda} \circ f$. Moreover, if $m(\phi,f) > 0$, by denseness of the Hölder functions, there exists a $\gamma$-Hölder function $\lambda_{0} : M \to \mathbb{R}$ for some $\gamma$ such that for $\epsilon > 0$ small enough we have $\parallel \overline{\lambda} - \lambda_{0} \parallel_{\infty} < \epsilon$ and $\phi > \overline{\lambda} - \overline{\lambda} \circ f + 2\epsilon \geq \lambda_{0} - \lambda_{0} \circ f$. The Lemma is proved.
	\end{proof}
	
	We readily obtain the following corollary as a weak version of Livsic Theorem.
	
	\begin{corollary}
		Let $f:M \to M$ be a continuous zooming system with the zooming set $\Lambda$ dense on $M$. Given a $\beta$-Hölder continuous potential  $\phi:M \to \mathbb{R}$ such that 
		\[
		\int \phi d \eta = 0, \,\, \text{for every} \,\, \eta \in \mathcal{M}_{f}^{1}(M),
		\]
		then there exist continuous functions $\lambda_{1},\lambda_{2}: M \to \mathbb{R}$ such that
		\[
		\lambda_{1} - \lambda_{1} \circ f \leq \phi \leq \lambda_{2} - \lambda_{2} \circ f.
		\] 
	\end{corollary}
	\begin{proof}
		In fact, we have both $\int \phi d \eta \geq 0$ and $\int -\phi d \eta \geq 0$ for all $\eta \in \mathcal{M}_{f}^{1}(M)$. By applying Theorem \ref{A} for both $\phi$ and $-\phi$, we obtain the result.
	\end{proof}

\begin{corollary}[Livsic Theorem]\label{inequality}
	Let $f:M \to M$ be a continuous zooming system with the zooming set $\Lambda$ dense in $M$ and the set of periodic points dense in $M$. Given a $\beta$-Hölder continuous potential  $\phi:M \to \mathbb{R}$ such that 
	\[
	\int \phi d \eta = 0, \,\, \text{for every} \,\, \eta \in \mathcal{M}_{f}^{1}(M),
	\]
	then there exists a continuous function $\lambda:  M \to \mathbb{R}$ such that
	\[
	\phi = \lambda - \lambda \circ f.
	\] 
\end{corollary}
\begin{proof}
	The Corollary \ref{inequality} gives continuous functions $\lambda_{1},\lambda_{2} : M \to \mathbb{R}$ such that
	\[
	\lambda_{1} - \lambda_{1} \circ f \leq \phi \leq \lambda_{2} - \lambda_{2} \circ f.
	\]
	It implies that
	\[
	(\lambda_{2} - \lambda_{1})\circ f \leq \lambda_{2} - \lambda_{1}.
	\]
	Setting $\lambda_{0} := \lambda_{2} - \lambda_{1}$ and given $p \in M$ a periodic point with period $k \in \mathbb{N}$, we have
	\[
	\lambda_{0}(p) = \lambda_{0}(f^{k}(p)) \leq \lambda_{0}(f^{k-1}(p)) \leq \dots \leq \lambda_{0}(f(p)) \leq \lambda_{0}(p) \implies \lambda_{0}(f(p)) = \lambda_{0}(p).
	\]
	Once $\lambda_{0}$ is continuous and the set of periodic points is dense in $M$, we have that
	\[
	\lambda_{0} \circ f = \lambda_{0} \implies \lambda_{1} - \lambda_{1} \circ f = \phi = \lambda_{2} - \lambda_{2} \circ f.
	\]
	Taking either $\lambda = \lambda_{1}$ or $\lambda = \lambda_{2}$ we obtain
	\[
	\phi = \lambda - \lambda \circ f.
	\] 
\end{proof}
	
	This concludes the proof of Theorem \ref{A}.
	
	\section{Proof of Theorem \ref{B}}
	
	We follow the ideas of Section 2 in \cite{CLT}. We begin by proving that, generically, there exists a unique maximizing measure. This comes mainly from the fact that, for a compact convex set in $\mathbb{R}^{n}$ , among the set of hyperplanes which support the convex set, the set of those hyperplanes having an intersection reduced to a single point is generic (intersection of countably many open and dense sets). Nonetheless, the proof has to be carried in infinite dimension and requires more details.
	
	We first recall some definitions. We say that a point $p$ is an extremal point of a compact convex set $C$ of $\mathbb{R}^{n}$ if $p$ is not the mid point of a segment totally included in $C$. We say that $p$ is strictly extremal if there exists a linear form which attains its maximum at the point $p$ only. A classical result (see \cite{R}) states that $C$ is equal to the closed convex hull of its strictly extremal points.  We reproduce now the proof of Proposition 10 in \cite{CLT}.
	
	\begin{proof}
		Let $\{\phi_{n}\}_{n \geq 1}$ be a dense subset of the unit ball of $\mathcal{H}$.	Since $\mathcal{H}$ is dense,
		\[
		d(\eta,\eta') : = \sum_{n=1}^{\infty}\frac{1}{2^{n}} \bigg{|} \int \phi_{n} d \eta - \int \phi_{n} d \eta' \bigg{|}
		\]
		defines a metric on $\mathcal{K}$ compatible with the weak* topology. Let us call
		\[
		\mathcal{R}_{\epsilon} :  = \{\phi \in \mathcal{C}^{0}(M,\mathbb{R}) \mid \text{diam}\mathcal{M}(\phi) < \epsilon\}.
		\]
		We claim that $\mathcal{R}_{\epsilon}$ is open in $\mathcal{C}^{0}(M,\mathbb{R})$ and $\mathcal{R}_{\epsilon} \cap \mathcal{H}$ is dense in $\mathcal{H}$ for the $\parallel \cdot \parallel_{\mathcal{H}}$-topology.  The desired residual set will be $\mathcal{R} = \cap_{n \geq 1} \mathcal{R}_{1/n} \cap \mathcal{H}$.
		
		We show by contradiction that $\mathcal{R}_{\epsilon}$ is open. If not, one can find $\phi \in \mathcal{R}_{\epsilon}$, $\varphi_{n} \in  \mathcal{C}^{0}(M,\mathbb{R})$ and $\mu_{n}, \nu_{n} \in \mathcal{M}(\phi + \varphi_{n})$ such that $\parallel \varphi_{n} \parallel_{\infty}$ converges to zero and $d(\mu_{n},\nu_{n}) \geq \epsilon$ for all $n$. We may assume by taking a subsequence that $\mu_{n} \to \mu_{0}$ and  $\nu_{n} \to \nu_{0}$. Let us prove that $\mu_{0} \in \mathcal{M}(\phi)$: indeed for every $\mu \in \mathcal{K}$,
		\[
		\int (\phi + \varphi_{n}) d\mu \leq \int (\phi + \varphi_{n}) d\mu_{n} \leq \int \phi d\mu_{n} + \parallel \varphi_{n} \parallel_{\infty}
		\]
		and $\int \phi d\mu \leq \int \phi d\mu_{0}$, by taking the limit on $n$. For the same reason we have that $\nu_{0} \in \mathcal{M}(\phi)$. We have obtained the contradiction once we have $d(\mu_{0},\nu_{0}) \geq \epsilon$.
		
		We now show that $\mathcal{R}_{\epsilon} \cap \mathcal{H}$ is dense in $\mathcal{H}$. Let $\phi_{0} \in \mathcal{H}$ and $\mathcal{K}_{0} = \mathcal{M}(\phi_{0})$. The continuous projection $\pi_{n} : \mathcal{K} \to \mathbb{R}^{n}$, $\pi_{n}(\mu) = \big{(} \int \phi_{1} d\mu, \dots, \int \phi_{n} d\mu  \big{)}$ sends $\mathcal{K}_{0}$ to a compact convex set $\pi_{n}(\mathcal{K}_{0})$ which admits a stictly extremal point $p_{n}$. We first notice that $\text{diam}\pi_{n}^{-1}(p_{n}) < 2^{-n}$ and we choose $n$ large enough so that $2^{-n} < \epsilon$. By definition of $p_{n} = (p^{1}, \dots, p^{n})$ there exists $(a^{1}, \dots, a^{n}) \in \mathcal{R}^{n}$ such that
		\[
		\sum_{i=1}^{n} a^{i}p^{i} > \sum_{i=1}^{n} a^{i}q^{i} \,\, \text{for all} \,\, q = (q^{1}, \dots, q^{n}) \in \pi_{n}(\mathcal{K}_{0}), q \neq p.
		\]
		In particular, if $\psi = \sum_{i=1}^{n} a^{i}\phi_{i}$,
		\[
		m_{0}(\psi) := \max \bigg{\{} \int \psi d \eta \bigg{|} \eta \in \mathcal{K}_{0} \bigg{\}} \,\, \text{and} \,\, \mathcal{M}_{0}(\psi) : = \bigg{\{} \eta \in \mathcal{K}_{0} \bigg{|} \int \psi d \eta \bigg{|} \eta = m(\psi) \bigg{\}},
		\]
		then $\mathcal{M}_{0}(\psi) = \pi_{n}^{-1}(p_{n})$ has diameter less than $\epsilon$. We show that for small enough $\delta > 0$, $\psi_{\delta} = (1 - \delta)\varphi + \delta \psi \in \mathcal{R}_{\epsilon}$. More precisely we show that, for any open set $\mathcal{U} \supset \mathcal{M}_{0}(\psi)$, for any sufficiently small $\delta > 0$ we have $\mathcal{U} \supset \mathcal{M}(\psi_{\delta})$. By contradiction, there exists a sequence $\mu_{n} \in \mathcal{M}(\psi_{\delta_{n}}) \backslash \mathcal{U}$ for some $\delta_{n} \to 0$. We may assume that $\mu_{n} \to \mu_{0} \in \mathcal{K} \backslash \mathcal{U}$. We first show that $\mu_{0} \in \mathcal{K}_{0} = \mathcal{M}(\varphi)$: for every $\mu \in \mathcal{K}$,
		\[
		\int \psi_{\delta_{n}} d\mu \leq \int \psi_{\delta_{n}} d\mu_{n} \leq \int \varphi d\mu_{n} + \delta_{n}\parallel \psi - \varphi\parallel_{\infty}, 
		\] 
		and by taking limit in $n$, $\int \varphi d\mu \leq \int \varphi d\mu_{0}$. We then show that $\mu_{0} \in \mathcal{M}_{0}(\psi)$: for every $\mu \in \mathcal{K}_{0}$,
		\[
		\int \psi_{\delta_{n}} d\mu = (1 - \delta_{n}) \int \varphi d\mu + \delta_{n} \int \psi d\mu \leq (1 - \delta_{n}) \int \varphi d\mu_{n} + \delta_{n} \int \psi d\mu_{n}.
		\]
		Since $\int \varphi d\mu_{n} \leq \int \varphi d\mu$, we have obtained $\int \psi d\mu \leq \int \psi d\mu_{n}$ and at the limit $\int \psi d\mu \leq \int \psi d\mu_{0}$. We have obtained a contradiction since $\mu_{0} \not \in \mathcal{U}$.
	\end{proof}
	
	Analogously to Theorem 6 in \cite{CLT} we obtain the next corollary.
	
	\begin{corollary}
		Let $f:M \to M$ be a continuous zooming system with the zooming set $\Lambda$ dense in $M$. Then the set of $\beta$-Hölder functions $\phi$ admitting a unique maximizing measure is generic in $\mathcal{C}^{\beta}$. For such functions $\phi$, the map $f$ is strictly ergodic on the support of its unique maximizing measure.
	\end{corollary}
	\begin{proof}
		This is a direct consequence of Theorem \ref{B} by taking $\mathcal{K} = \mathcal{M}_{f}^{1}(M)$ and $\mathcal{H} = \mathcal{C}^{\beta}$, once $\mathcal{K} = \mathcal{M}_{f}^{1}(M)$ is a compact and convex set of probability measures and $\mathcal{C}^{\beta}$ is dense in $\mathcal{C}^{0}(M,\mathbb{R})$ for any compact metric space $M$.
	\end{proof}
	
	\section{Applications}
	
	In this section, we give examples of zooming systems. Most of them are nonuniformly expanding maps in the sense of \cite{A} or \cite{AOS}. The reference \cite{Pi1} contains a more general approach. We begin by the hyperbolic times as a particular case of zooming times.
	
		\subsection{Hyperbolic Times}
	
	The idea of hyperbolic times is a key notion on the study of non-uniformly hyperbolic dynamics and it was introduced by Alves et al. 
	This is powerful to get expansion in the context of non-uniform expansion. Here, we recall the basic definitions and results on hyperbolic times that we will use later on. In the following, we give definitions taken from \cite{A} and \cite{Pi1}.
	
	\begin{definition}
		Let $M$ be a compact Riemannian manifold of dimension $d \geq 1$ and $f:M \to M$ a continuous map defined on $M$.
		The map $f$ is called \textbf{\textit{non-flat}} if it is a local $C^{1 + \alpha}, (\alpha >0)$ diffeomorphism in the whole manifold except in a 
		non-degenerate set $\mathcal{C} \subset M$. We say that $\mathcal{C} \subset M$ is a \textbf{\textit{non-degenerate set}}
		if there exist $\beta, B > 0$ such that the following two conditions hold.
		
		\begin{itemize}
			
			\item $\frac{1}{B} d(x,\mathcal{C})^{\beta} \leq \frac{\parallel Df(x) v\parallel}{\parallel v \parallel} \leq B d(x,\mathcal{C})^{-\beta}$ for all $v \in T_{x}M$.
			
			For every $x, y \in M \backslash \mathcal{C}$ with $d(x,y) < d(x,\mathcal{C})/2$ we have
			
			\item $\mid \log \parallel Df(x)^{-1} \parallel - \log \parallel Df(y)^{-1} \parallel \mid \leq \frac{B}{d(x,\mathcal{C})^{\beta}} d(x,y)$.
			
		\end{itemize}
		
	\end{definition}
	
	In the following, we give the definition of a hyperbolic time \cite{A}, \cite{Pi1}.
	
	\begin{definition}
		(Hyperbolic times). Let us fix $0 < b = \frac{1}{3} \min\{1,1 \slash \beta\} < \frac{1}{2} \min\{1,1\slash \beta\}$. 
		Given $0 < \sigma < 1$ and $\epsilon > 0$, we will say that $n$ is a $(\sigma, \epsilon)$\textbf{\textit{-hyperbolic time}} for a point $x \in M$ 
		(with respect to the non-flat map $f$ with a $\beta$-non-degenerate critical/singular set $\mathcal{C})$ if for all $1 \leq k \leq n$ we have 
		\[
		\prod_{j=n-k}^{n-1} \|(Df \circ f^{j}(x)^{-1}\| \leq \sigma^{k} \,\, \text{and} \,\, dist_{\epsilon}(f^{n-k}(x), \mathcal{C}) \geq \sigma^{bk}.
		\]
		We denote de set of points of $M$ such that $n \in \mathbb{N}$ is a $(\sigma,\epsilon)$-hyperbolic time by $H_{n}(\sigma,\epsilon,f)$. 
	\end{definition}
	
	\begin{proposition}
		(Positive frequence). Given $\lambda > 0$ there exist $\theta > 0$ and $\epsilon_{0} > 0$ such that, for every $x \in U$ and $\epsilon \in (0,\epsilon_{0}]$,
		\[
		\#\{1 \leq j \leq n; \,\, x \in H_{j}(e^{-\lambda \slash 4}, \epsilon, f) \} \geq \theta n,
		\]
		whenever $\frac{1}{n}\sum_{i=0}^{n-1}\log\|(Df(f^{i}(x)))^{-1}\|^{-1} \geq \lambda$ and $\frac{1}{n}\sum_{i=0}^{n-1}-\log dist_{\epsilon}(x, \mathcal{C}) \leq \frac{\lambda}{16 \beta}$.
	\end{proposition}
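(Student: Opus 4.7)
The strategy is to apply Pliss's Lemma twice---once to the sequence of expansion rates and once to the sequence of distances to the critical set $\mathcal{C}$---and then intersect the two resulting sets of ``good'' indices. Recall \emph{Pliss's Lemma}: given reals $a_{1}, \ldots, a_{n}$ with $a_{i} \leq H$ and $\sum a_{i} \geq A_{1} n$, for every $A_{2} < A_{1}$ there are at least $\frac{A_{1}-A_{2}}{H-A_{2}}\, n$ indices $j$ with $\sum_{i=k+1}^{j} a_{i} \geq A_{2}(j-k)$ for every $0 \leq k < j$.

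First, to secure the backward expansion condition, I use the non-flatness inequality, which yields $\log \|Df(y)^{-1}\|^{-1} \leq \log B + \beta\bigl(-\log \dist_{\epsilon}(y, \mathcal{C})\bigr)$ up to an $\epsilon_{0}$-dependent constant, to introduce
\[
\tilde{a}_{i} := \log \|Df(f^{i}(x))^{-1}\|^{-1} - \beta \bigl(-\log \dist_{\epsilon}(f^{i}(x), \mathcal{C})\bigr),
\]
which is bounded above by some $H_{0} = H_{0}(\epsilon_{0}, B, \beta)$ and whose mean satisfies $\frac{1}{n}\sum_{i=0}^{n-1}\tilde{a}_{i} \geq \lambda - \beta \cdot \frac{\lambda}{16\beta} = \frac{15\lambda}{16}$ by combining the two hypotheses. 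Pliss with threshold $\lambda/4$ then produces $E_{1} \subset \{1, \ldots, n\}$ of density at least $\theta_{1} > 0$ on which $\sum_{i=k}^{j-1}\tilde{a}_{i} \geq \frac{\lambda}{4}(j-k)$; since $-\log\dist_{\epsilon} \geq 0$, this gives $\prod_{i=k}^{j-1}\|Df(f^{i}(x))^{-1}\| \leq \sigma^{j-k}$ with $\sigma = e^{-\lambda/4}$, the first defining condition of hyperbolic times.

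Second, for the distance-from-$\mathcal{C}$ condition, I apply Pliss to $b_{i} := \log \dist_{\epsilon}(f^{i}(x), \mathcal{C}) \leq 0$, whose mean is $\geq -\lambda/(16\beta)$. With threshold $-\mu$ for an appropriate $\mu \in (\lambda/(16\beta),\, b\lambda/4]$, I obtain $E_{2}$ of density at least $\theta_{2} = 1 - \lambda/(16\beta\mu) > 0$ on which
\[
\sum_{i=k}^{j-1}\bigl(-\log \dist_{\epsilon}(f^{i}(x), \mathcal{C})\bigr) \leq \mu(j-k), \quad 0 \leq k < j.
\]
Taking $k = j-m$ and using non-negativity of each summand, the leading term satisfies $-\log \dist_{\epsilon}(f^{j-m}(x), \mathcal{C}) \leq \mu m \leq -bm \log \sigma$, i.e.\ $\dist_{\epsilon}(f^{j-m}(x), \mathcal{C}) \geq \sigma^{bm}$, which is the second hyperbolic-time condition.

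Every $j \in E_{1} \cap E_{2}$ is then a $(\sigma, \epsilon)$-hyperbolic time for $x$, and $|E_{1}\cap E_{2}| \geq (\theta_{1}+\theta_{2}-1)n$. The main obstacle, I expect, is the parameter juggling: $\theta_{1}$ deteriorates as the upper bound $H_{0}(\epsilon_{0})$ grows, forcing $\epsilon_{0}$ to be bounded below in terms of $\lambda, B, \beta$, while $\theta_{2}$ relies on the arithmetic inequality $b\lambda/4 > \lambda/(16\beta)$ (which opens up the interval of admissible $\mu$) to be positive. Arranging $\theta_{1}+\theta_{2} > 1$ uniformly for all $\epsilon \in (0, \epsilon_{0}]$ is the delicate step that pins down both $\epsilon_{0}$ and the final $\theta$.
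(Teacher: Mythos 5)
The paper does not actually prove this Proposition; it is a background result on hyperbolic times, quoted from Alves' lecture notes and Alves--Bonatti--Viana, stated to set up the hyperbolic pre-ball proposition that follows. So there is no paper proof to compare against, only the approach in the cited sources, and your double application of Pliss's lemma followed by an intersection is indeed that approach. The two deductions you make from the Pliss conclusions -- that $\sum_{i=k}^{j-1}\tilde{a}_{i}\geq\tfrac{\lambda}{4}(j-k)$ together with $-\log\dist_{\epsilon}\geq 0$ gives the contraction condition, and that the one-sided partial-sum bound on $-\log\dist_{\epsilon}$ gives $\dist_{\epsilon}(f^{j-m}(x),\mathcal{C})\geq\sigma^{bm}$ -- are correct.

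The gap is the claim that $\tilde{a}_{i}$ is bounded above by a constant $H_{0}(\epsilon_{0},B,\beta)$ depending only on $\epsilon_{0}$. When $\dist(f^{i}(x),\mathcal{C})<\epsilon$ the truncated and untruncated logarithms agree and non-flatness gives $\tilde{a}_{i}\leq\log B$; but when $\epsilon\leq\dist(f^{i}(x),\mathcal{C})<\epsilon_{0}$ your correction term $-\log\dist_{\epsilon}$ vanishes while non-flatness yields only $\tilde{a}_{i}=a_{i}\leq\log B-\beta\log\dist(f^{i}(x),\mathcal{C})\leq\log B-\beta\log\epsilon$. That bound diverges as $\epsilon\to 0$, so the constant you wrote as $H_{0}(\epsilon_{0})$ is really $H_{0}(\epsilon)$, and $\theta_{1}$ is not bounded below uniformly in $\epsilon\in(0,\epsilon_{0}]$. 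Replacing $\dist_{\epsilon}$ by $\dist_{\epsilon_{0}}$ in $\tilde{a}_{i}$ repairs the upper bound but destroys the mean estimate, because the hypothesis controls $\sum-\log\dist_{\epsilon}$ and $-\log\dist_{\epsilon_{0}}\geq-\log\dist_{\epsilon}$, so no lower bound on $\tfrac1n\sum\tilde{a}_{i}$ follows. The cited references avoid this by the standing (and here tacit) hypothesis that $\|Df\|$ is uniformly bounded on $M$, i.e.\ $\mathcal{C}$ is where $Df$ degenerates rather than blows up; then $a_{i}\leq\log\sup_{M}\|Df\|$ outright, and your subtraction $a_{i}-\beta\bigl(-\log\dist_{\epsilon}(f^{i}(x),\mathcal{C})\bigr)$ is not even needed on $\{\dist\geq\epsilon\}$. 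All of the paper's applications (Viana, Benedicks--Carleson, Rovella) satisfy this boundedness, but you must invoke it, because the non-degeneracy conditions alone permit $\|Df\|\to\infty$ near $\mathcal{C}$.

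One further arithmetic point: the window $(\lambda/(16\beta),\,b\lambda/4]$ for $\mu$ is nonempty exactly when $b>1/(4\beta)$. With the paper's fixed $b=\tfrac13\min\{1,1/\beta\}$ this holds for $\beta\geq 1$ (then $b=1/(3\beta)$ and $1/(3\beta)>1/(4\beta)$), but fails for $\beta\leq 3/4$, where $b=1/3\leq 1/(4\beta)$. So the explicit constant $\lambda/(16\beta)$ in the quoted statement is calibrated for the regime $\beta\geq 1$; for small $\beta$ one must take the slow-recurrence threshold smaller (say $b\lambda/8$) for the intersection argument to close.
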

	
	If $f$ is non-uniformly expanding, it follows from the proposition that the points of $U$ have infinitely many moments with positive frequency of hyperbolic times. In particular, they have infinitely many hyperbolic times.
	
	The following proposition shows that the hyperbolic times are indeed zooming times, where the zooming contraction is $\alpha_{k}(r) = \sigma^{k/2}r$.
	
	\begin{proposition}
		Given $\sigma \in (0,1)$ and $\epsilon > 0$, there is $\delta,\rho > 0$, depending only on $\sigma$ and $\epsilon$ and on the map $f$, such that if $x \in H_{n}(\sigma,\epsilon,f)$ then there exists a neighbourhood $V_{n}(x)$ of $x$ with the following properties for all $y,z \in V_{n}(x)$:
		
		\begin{enumerate}
			\item[(1)] $f^{n}$ maps $\overline{V_{n}(x)}$ diffeomorphically onto the ball $\overline{B_{\delta}(f^{n}(x))}$;
			\item[(2)] $dist(f^{n-j}(y),f^{n-j}(z)) \leq \sigma^{j\slash 2} dist(f^{n}(y), f^{n}(z)), \text{for all} \, \, y,z \in V_ {n}(x)$ and $1 \leq j < n$.
			\item[(3)]$\log \frac{\mid \det Df^{n}(y)\mid}{\mid \det Df^{n}(z)\mid} \leq \rho d(f^{n}(y),f^{n}(z))$.
		\end{enumerate}
	
	\end{proposition}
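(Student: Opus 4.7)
The plan is to construct $V_n(x)$ by pulling the ball $B_\delta(f^n(x))$ back along the orbit $x,f(x),\ldots,f^n(x)$, exploiting the two hyperbolic-time conditions in tandem. Fix $\delta>0$ small (to be specified), set $W_0=B_\delta(f^n(x))$, and for $1\leq j\leq n$ let $W_j$ be the connected component of $f^{-1}(W_{j-1})$ containing $f^{n-j}(x)$. I will prove by induction on $j$ that $W_j\subset B_{\sigma^{j/2}\delta}(f^{n-j}(x))$, that $W_j$ is disjoint from $\mathcal{C}$, and that $f:W_j\to W_{j-1}$ is a diffeomorphism. Then $V_n(x):=W_n$ satisfies item (1), and item (2) is an immediate consequence of the nested contraction.

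\textbf{Inductive step.} The second hyperbolic-time condition $\mathrm{dist}_\epsilon(f^{n-j}(x),\mathcal{C})\geq\sigma^{bj}$ ensures that the candidate $W_j\subset B_{\sigma^{j/2}\delta}(f^{n-j}(x))$ stays at distance $\gtrsim\sigma^{bj}$ from $\mathcal{C}$, provided $\delta$ is small enough; since $b<1/2$, this keeps $f$ a local diffeomorphism on $W_j$. Using the second non-flatness inequality, $\log\|Df(y)^{-1}\|$ differs from $\log\|Df(f^{n-j}(x))^{-1}\|$ by at most $B\,d(f^{n-j}(x),\mathcal{C})^{-\beta}\cdot\mathrm{diam}(W_j)\lesssim B\sigma^{-bj\beta}\sigma^{j/2}\delta=B\delta\,\sigma^{(1/2-b\beta)j}$, and summability in $j$ holds precisely because $b\beta<1/2$, by the choice of $b$. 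Hence the accumulated correction factor is some uniform constant $e^C$, independent of $n$. Combined with the first hyperbolic-time condition $\prod_{i=n-k}^{n-1}\|Df(f^i(x))^{-1}\|\leq\sigma^k$, this yields a backward contraction of $W_j$ onto a subset of $B_{(e^C\sigma)^{j}\delta}(f^{n-j}(x))$, and a final shrinkage of $\delta$ guarantees $e^C\sigma\leq\sigma^{1/2}$, which closes the induction.

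\textbf{Distortion.} For item (3), I would write
\[
\log\frac{|\det Df^n(y)|}{|\det Df^n(z)|}=\sum_{i=0}^{n-1}\Bigl(\log|\det Df(f^i(y))|-\log|\det Df(f^i(z))|\Bigr),
\]
bound each summand via the $C^{1+\alpha}$ regularity of $f$ off $\mathcal{C}$ (which delivers a Hölder estimate with local constant of order $d(f^i(x),\mathcal{C})^{-\beta}$), and then feed in the slow-recurrence bound for $d(f^i(x),\mathcal{C})$ together with the contraction from item (2). The tail of the sum becomes a geometric series of the form $\sum \sigma^{(\alpha/2-b\beta)(n-i)}d(f^n(y),f^n(z))^\alpha$, which is summable and dominated by $\rho\,d(f^n(y),f^n(z))$ for a uniform $\rho$. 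The main obstacle throughout is the calibration of $\delta$: at every single step, the Hölder correction produced by the orbit's approach to $\mathcal{C}$ must be absorbed by the gap between the rate $\sigma^k$ provided by the first hyperbolic-time condition and the desired rate $\sigma^{k/2}$. This is exactly what the hypothesis $b<\tfrac{1}{3}\min\{1,1/\beta\}$ is designed to guarantee.
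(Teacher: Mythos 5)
This Proposition is quoted in the paper without proof; it is standard background material (the paper cites \cite{A} and \cite{Pi1}, and the result is essentially Lemma~5.2 of \cite{ABV} together with its distortion companion in \cite{A}). Your argument reproduces that classical proof and the architecture is right: pull the ball back branch by branch, use the slow-recurrence condition $\mathrm{dist}_\epsilon(f^{n-j}(x),\mathcal{C})\geq\sigma^{bj}$ to keep $W_j$ inside the region where $f$ is a local diffeomorphism, compare $\|Df^{-1}\|$ on $W_j$ with its value at $f^{n-j}(x)$ via the non-degeneracy estimate, and feed the cumulative correction into the gap between the rate $\sigma^{k}$ of the first hyperbolic-time condition and the target $\sigma^{k/2}$, with the series $\sum_j\sigma^{(1/2-b\beta)j}$ convergent precisely because $b\beta<1/2$. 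Two places need tightening. First, the induction as you wrote it is self-referential: the bound on the correction at stage $m$ uses $\mathrm{rad}(W_m)\leq\sigma^{m/2}\delta$, which is the quantity you are proving; the standard fix is a continuity (open-and-closed in the radius $r'$) argument, or an explicit a priori bound on the inverse branch via a fixed-point scheme. Also, the cumulative factor is $e^{C}\sigma^{j}$ rather than $(e^{C}\sigma)^{j}$; the shrinkage of $\delta$ you invoke still works, since the binding case is $j=1$, but the formula as written is imprecise. Second, for item (3) you appeal to the Hölder continuity of $Df$ coming from $C^{1+\alpha}$ regularity, which would deliver $\rho\,d(f^{n}(y),f^{n}(z))^{\alpha}$, not the Lipschitz bound $\rho\,d(f^{n}(y),f^{n}(z))$ asserted in the statement. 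The Lipschitz conclusion needs the non-degeneracy estimate on $\log|\det Df|$, i.e.\ $\bigl|\log|\det Df(x)|-\log|\det Df(y)|\bigr|\leq \frac{B}{d(x,\mathcal{C})^{\beta}}\,d(x,y)$, which is part of Alves's definition of a non-degenerate critical set but is omitted from the version recorded in this paper; with that hypothesis restored, your chain of inequalities, combined with item (2) and the recurrence bound, gives exactly $\rho=B\sum_{k\geq1}\sigma^{k(1/2-b\beta)}$.
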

	
	The sets $V_{n}(x)$ are called hyperbolic pre-balls and their images $f^{n}(V_{n}(x)) = B_{\delta}(f^{n}(x))$, hyperbolic balls.
	

	\subsection{Uniformly Expanding Maps} As can be seen in \cite{OV} Chapter 11, we have the so-called \textit{uniformly expanding maps} which is defined on a compact differentiable manifold $M$ as a $C^{1}$ map $f:M \to M$ (with no critical set) for which there exists $\sigma > 1$ such that
	\[
	\|Df(x)v\|\geq \sigma \|v\|, \,\, \text{for every} \,\, x \in M, v \in T_{x}M.
	\]
	
	For compact metric spaces $(M,d)$ we define it as a continuous map $f:M \to M$, for which there exists $\sigma > 1, \delta>0$ such that for every $x \in M$ we have that the image of the ball $B(x,\delta)$ contains a neighbourhood of the ball $B(f(x),\delta)$ and
	\[
	d(f(a),f(b)) \geq \sigma d(a,b), \,\, \text{for every} \,\, a,b \in B(x,\delta).
	\]
	We observe that the uniformly expanding maps on differentiable manifolds satisfy the conditions for the definition on compact metric spaces, when they are seen as Riemannian manifolds. If the metric space $M$ is connected, uniform expansion implies topological exactness. The zooming set $\Lambda$ is the whole space $M$.
	
	\subsection{Local Diffeomorphisms} As can be seen in details in \cite{A}, we will briefly describe a class of non-uniformly expanding maps.
	
	Here we present a robust ($C^{1}$ open) classes of local diffeomorphisms (with no critical set) that are non-uniformly expanding. Such classes of maps can be obtained, e.g., through deformation of a uniformly expanding map by isotopy inside some small region. In general, these maps are not uniformly expanding: deformation can be made in such way that the new map has periodic saddles.
	
	Let $M$ be a compact manifold supporting some uniformly expanding map $f_{0}$. $M$ could be the $d$-dimensional torus $\mathbb{T}^{d}$, for instance. Let $V \subset M$ be some small compact domain, so that the restriction of $f_{0}$ to $V$ is injective. Let $f$ be any map in a sufficiently small $C^{1}$-neighbourhood $\mathcal{N}$ of $f_{0}$ so that:
	
	\begin{itemize}
		\item $f$ is \textit{volume expanding everywhere}: there exists $\sigma_{1} > 1$ such that
		\[
		|\det Df(x)| > \sigma_{1} \,\, \text{for every} \,\, x \in M;
		\]	
		
		\item $f$ is \textit{expanding outside} $V$: there exists $\sigma_{0} > 1$ such that
		\[
		\|Df(x)^{-1}\| < \sigma_{0} \,\, \text{for every} \,\, x \in M \backslash V;
		\] 
		
		\item $f$ is \textit{not too contracting on} $V$: there is some small $\delta > 0$ such that
		\[
		\|Df(x)^{-1}\| < 1 + \delta \,\, \text{for every} \,\, x \in V.
		\]
	\end{itemize}
	
	In \cite{A} it is shown that this class satisfy the condition for non-uniform expansion. We have here the zooming set $\Lambda$ dense in $M$.

	Now, we recall examples given in \cite{AOS}, where the expanding set is dense in $M$ and there exist critical points. 
	
	\subsection{Viana maps} We recall the definition of the open class of maps with critical sets in dimension 2, introduced by M. Viana in \cite{V}. We skip the technical
	points. It can be generalized for any dimension (See \cite{A}).

	Let $a_{0} \in (1,2)$ be such that the critical point $x=0$ is pre-periodic for the quadratic map $Q(x)=a_{0} - x^{2}$. Let $S^{1}=\mathbb{R}/\mathbb{Z}$ and 
	$b:S^{1} \to \mathbb{R}$ a Morse function, for instance $b(\theta) = \sin(2\pi\theta)$. For fixed small $\alpha > 0$, consider the map
	\[
	\begin{array}{c}
		f_{0}: S^{1} \times \mathbb{R} \longrightarrow S^{1} \times \mathbb{R}\\
		\,\,\,\,\,\,\,\,\,\,\,\,\,\,\,\,\,\,\,\ (\theta,x) \longmapsto (g(\theta),q(\theta,x))
	\end{array}
	\] 
	where $g$ is the uniformly expanding map of the circle defined by $g(\theta)=d\theta
	(mod\mathbb{Z})$ for some $d \geq 16$, and $q(\theta,x) = a(\theta) - x^{2}$ with $a(\theta) = a_{0} + \alpha b(\theta)$. It is easy to check that for $\alpha > 0$ 
	small enough there is an interval $I \subset (-2,2)$ for which $f_{0}(S^{1} \times I)$ is contained in the interior of $S^{1} \times I$, as can be seen in \cite{V}.      Thus, any map $f$ sufficiently
	close to $f_{0}$ in the $C^{0}$ topology has $S^{1} \times I$ as a forward invariant region. We consider from here on these maps $f$ close to $f_{0}$ restricted to 
	$S^{1} \times I$. Taking into account the expression of $f_{0}$ it is not difficult to check that for $f_{0}$ (and any map $f$ close to $f_{0}$ in the $C^{2}$ topology)
	the critical set is non-degenerate. Also can be seen in \cite{V}.
	
	The main properties of $f$ in a $C^{3}$ neighbourhood of $f$ that we will use here are summarized below (See \cite{A},\cite{AV},\cite{Pi1}):
	
	\begin{enumerate}
		\item[(1)] $f$ is \textbf{\textit{non-uniformly expanding}}, that is, there exist $\lambda > 0$ and a Lebesgue full measure set $H \subset S^{1} \times I$ such that 
		for all point $p=(\theta, x) \in H$, the following holds
		\[
		\displaystyle \limsup_{n \to \infty} \frac{1}{n} \sum_{i=0}^{n-1} \log \parallel Df(f^{i}(p))^{-1}\parallel^{-1} < -\lambda.
		\]  
		\item[(2)] Its orbits have \textbf{\textit{slow approximation to the critical set}}, that is, for every $\epsilon > 0$ the exists $\delta > 0$ such that for every point
		$p=(\theta, x) \in H \subset S^{1} \times I$, the following holds 
		\[
		\displaystyle \limsup_{n \to \infty} \frac{1}{n} \sum_{i=0}^{n-1} - \log \text{dist}_{\delta}(p,\mathcal{C}) < \epsilon.
		\]  
		where 
		\[
		\text{dist}_{\delta}(p,\mathcal{C}) =  
		\left\{ 
		\begin{array}{ccc}
			dist(p,\mathcal{C}), & if & dist(p,\mathcal{C}) < \delta\\
			1 & if & dist(p,\mathcal{C}) \geq \delta 
		\end{array}
		\right.
		\]  
		\item[(3)] $f$ is topologically exact;
		
		\item[(4)] $f$ is strongly topologically transitive;
		
		\item[(5)] it has a unique ergodic absolutely continuous invariant (thus SRB) measure;
		
		\item[(6)]the density of the SRB measure varies continuously in the $L^{1}$ norm with $f$.
	\end{enumerate}
	
	\begin{remark}
		We observe that this definition of non-uniformly expanding map implies positive frequency of hyperbolic times as can be seen in \cite{A}. So, it is included in our definition of a zooming system. Also, the zooming set is dense in $M = S^{1} \times I$.
	\end{remark}

	\subsection{Benedicks-Carleson Maps} We study a class of non-hyperbolic maps of the interval with the condition of exponential growth of the derivative at critical values, called 
	\textbf{\textit{Collet-Eckmann Condition}}. We also ask the map to be $C^{2}$ and topologically exact and the critical points to have critical order 
	$2 \leq \alpha < \infty$.
	
	Given a critical point $c \in I$, the \textbf{\textit{critical order}} of $c$ is a number $\alpha_{c} > 0$ such that 
	$f(x) = f(c) \pm |g_{c}(x)| ^{\alpha_{c}}, \,\, \text{for all} \, \, x \in \mathcal{U}_{c}$ where $g_{c}$ is a diffeomorphism 
	$g_{c}: \mathcal{U}_{c} \to g(\mathcal{U}_{c})$ and $\mathcal{U}_{c}$ is a neighbourhood of $c$. 
	
	Let $\delta>0$ and denote $\mathcal{C}$ the set of critical points and $\displaystyle B_{\delta} = \cup_{c \in \mathcal{C}} (c - \delta, c + \delta)$. 
	Given $x \in I$, we suppose that
	
	\begin{itemize}
		\item \textbf{(Expansion outside $B_{\delta}$)}.  There exists $\kappa > 1 $ and $\beta > 0$ such that, if $x_{k} = f^{k}(x) \not \in B_{\delta}, \,\, 0 \leq k \leq n-1$ then $|Df^{n}(x)| \geq \kappa \delta^{(\alpha_{\max} -1)}e^{\beta n}$, where $\alpha_{\max} = \max \{\alpha_{c}, c \in \mathcal{C}\}$. Moreover, if $x_{0} \in f(B_{\delta})$ or $x_{n} \in B_{\delta}$ then $|Df^{n}(x)| \geq \kappa e^{\beta n}$.
		
		\item \textbf{(Collet-Eckmann Condition)}. There exists $\lambda > 0$ such that 
		\[
		|Df^{n}(f(c))| \geq e^{\lambda n}.
		\]
		
		\item \textbf{(Slow Recurrence to $\mathcal{C}$)}. There exists $\sigma \in (0, \lambda/5)$ such that 
		\[
		dist(f^{k}(x), \mathcal{C}) \geq e^{-\sigma k}.
		\]
	\end{itemize}

The above conditions has an important contribuition by Freitas in (\cite{F}). This is an important class of non-uniformly expanding maps.

	\subsection{Rovella Maps}
	
	There is a class of non-uniformly expanding maps known as \textbf{\textit{Rovella Maps}}. They are derived from the so-called \textit{Rovella Attractor},
	a variation of the \textit{Lorenz Attractor}. We proceed with a brief presentation. See \cite{AS} for details.
	
	\subsubsection{Contracting Lorenz Attractor}
	
	The geometric Lorenz attractor is the first example of a robust attractor for a flow containing a hyperbolic singularity. The attractor is a transitive maximal invariant
	set for a flow in three-dimensional space induced by a vector field having a singularity at the origin for which the derivative of the vector field at the singularity has
	real eigenvalues $\lambda_{2} < \lambda_{3} < 0 < \lambda_{1}$ with $\lambda_{1} + \lambda_{3} > 0$. The singularity is accumulated by regular orbits which prevent the 
	attractor from being hyperbolic.
	
	The geometric construction of the contracting Lorenz attractor (Rovella attractor) is the same as the geometric Lorenz attractor. The only difference is the condition
	(A1)(i) below that gives in particular $\lambda_{1} + \lambda_{3} < 0$. The initial smooth vector field $X_{0}$ in $\mathbb{R}^{3}$ has the following properties:
	
	\begin{itemize}
		
		\item[(A1)] $X_{0}$ has a singularity at $0$ for which the eigenvalues $\lambda_{1},\lambda_{2},\lambda_{3} \in \mathbb{R}$ of $DX_{0}(0)$ satisfy:
		\begin{itemize}
			
			\item[(i)] $0 < \lambda_{1} < -\lambda_{3} < 0 < -\lambda_{2}$,
			
			\item[(ii)] $r > s+3$, where $r=-\lambda_{2}/\lambda_{1}, s=-\lambda_{3}/\lambda_{1}$;
		\end{itemize}
		
		\item[(A2)] there is an open set $U \subset \mathbb{R}^{3}$, which is positively invariant under the flow, containing the cube
		$\{(x,y,z) : \mid x \mid \leq 1, \mid y \mid \leq 1, \mid x \mid \leq 1\}$ and supporting the \textit{Rovella attractor}
		\[
		\displaystyle \Lambda_{0} = \bigcap_{t \geq 0} X_{0}^{t}(U).
		\]
		
		The top of the cube is a Poincar\'e section foliated by stable lines $\{x = \text{const}\} \cap \Sigma$ which are invariant under Poincar\'e first return map $P_{0}$.
		The invariance of this foliation uniquely defines a one-dimensional map $f_{0} : I \backslash \{0\} \to I$ for which
		\[
		f_{0} \circ \pi = \pi \circ P_{0},
		\]
		where $I$ is the interval $[-1,1]$ and $\pi$ is the canonical projection $(x,y,z) \mapsto x$;
		
		\item[(A3)] there is a small number $\rho >0$ such that the contraction along the invariant foliation of lines $x =$const in $U$ is stronger than $\rho$.
	\end{itemize}
	
	See \cite{AS} for properties of the map $f_{0}$.
	
	\subsubsection{Rovella Parameters}
	
	The Rovella attractor is not robust. However, the chaotic attractor persists in a measure theoretical sense: there exists a one-parameter family of positive Lebesgue measure
	of $C^{3}$ close vector fields to $X_{0}$ which have a transitive non-hyperbolic attractor. In the proof of that result, Rovella showed that there is a set of parameters
	$E \subset (0,a_{0})$ (that we call \textit{Rovella parameters}) with $a_{0}$ close to $0$ and $0$ a full density point of $E$, i.e.
	\[
	\displaystyle \lim_{a \to 0} \frac{\mid E \cap (0,a) \mid}{a} = 1,
	\]
	such that:
	
	\begin{itemize}
		\item[(C1)] there is $K_{1}, K_{2} > 0$ such that for all $a \in E$ and $x \in I$
		\[
		K_{2} \mid x \mid^{s-1} \leq f_{a}'(x) \leq K_{1} \mid x \mid^{s-1},
		\]
		where $s=s(a)$. To simplify, we shall assume $s$ fixed.
		
		\item[(C2)] there is $\lambda_{c} > 1$ such that for all $a \in E$, the points $1$ and $-1$ have \textit{Lyapunov exponents} greater than $\lambda_{c}$:
		\[
		(f_{a}^{n})'(\pm 1) > \lambda_{c}^{n}, \,\, \text{for all} \, \, n \geq 0;
		\]
		
		\item[(C3)] there is $\alpha > 0$ such that for all $a \in E$ the \textit{basic assumption} holds:
		\[
		\mid f_{a}^{n-1}(\pm 1)\mid > e^{-\alpha n}, \,\, \text{for all} \, \, n \geq 1;
		\]
		
		\item[(C4)] the forward orbits of the points $\pm 1$ under $f_{a}$ are dense in $[-1,1]$ for all $a \in E$.
		
	\end{itemize}
	
	\begin{definition}
		We say that a map $f_{a}$ with $a \in E$ is a \textbf{\textit{Rovella Map}}. 
	\end{definition}
	
	\begin{theorem}
		(Alves-Soufi \cite{AS}) Every Rovella map is non-uniformly expanding and has slow recurrence to the critical set. 
	\end{theorem}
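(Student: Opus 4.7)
My plan is to follow a Benedicks-Carleson-style binding construction adapted to the Rovella setting, exploiting conditions (C1)--(C4) in order. The critical set here is $\mathcal{C}=\{0\}$ and the critical values are $\pm 1$, so the Collet-Eckmann condition (C2) and the basic assumption (C3) furnish, respectively, exponential derivative growth and slow recurrence directly along the critical orbits. As a first step I would verify both conclusions on the critical orbits themselves: from (C3) one reads off $-\log|f_a^{n-1}(\pm 1)|<\alpha n$, which after truncation at a small scale $\delta$ bounds $\tfrac{1}{n}\sum_{i=0}^{n-1}-\log\dist_\delta(f_a^i(\pm 1),\mathcal{C})$ by a quantity tending to $0$ as $\delta\to 0$; and from (C2) one has $\tfrac{1}{n}\log(f_a^n)'(\pm 1)\geq \log\lambda_c>0$, giving non-uniform expansion along the critical orbits.

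The core of the proof is to transfer these estimates from the measure-zero critical orbits to a full Lebesgue measure set. For a typical $x$ I would partition its orbit into \emph{free} segments, on which it stays outside $(-\delta,\delta)$, and \emph{bound} segments, which begin when $f_a^k(x)$ enters $(-\delta,\delta)$ and continue as long as $f_a^{k+j}(x)$ shadows $f_a^j(\pm 1)$, with the sign of $\pm 1$ chosen according to $f_a^k(x)$. On free segments, (C1) combined with a uniform expansion estimate outside the critical region, of the Ma\~n\'e type alluded to in the Benedicks-Carleson subsection, yields exponential derivative growth. On a bound segment of length $\ell$, (C2) is inherited up to a controlled multiplicative constant and gives growth comparable to $\lambda_c^{\ell}$; by choosing $\ell$ proportional to $-\log|f_a^k(x)|$ one arranges that this dominates the initial loss $\sim|f_a^k(x)|^{s-1}$ coming from (C1), with a definite exponential surplus.

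The hard part will be the global book-keeping that turns these local trade-offs into honest Ces\`aro averages for Lebesgue almost every $x$. For non-uniform expansion one must show that the total surplus accumulated over consecutive free--bound cycles has positive lower density; for slow recurrence one must show that deep returns into $(-\delta,\delta)$ are sufficiently rare on average that their logarithmic contribution is sublinear, with constant that can be made arbitrarily small by shrinking $\delta$. Both facts require a measure-theoretic transfer from the critical orbits to their ``binding basins'', and this is where the density property (C4) together with the construction of the Rovella parameter set $E$ as a full-density subset of $(0,a_0)$ become essential: they are precisely what prevent a positive-measure set of points from producing anomalously deep returns that would defeat the binding estimates. This final measure-theoretic step is where the full strength of the construction in \cite{AS} is genuinely needed.
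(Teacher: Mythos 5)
The paper does not prove this theorem at all: it is stated, with attribution, as a known result imported from Alves--Soufi~\cite{AS} and used as a black box to place Rovella maps inside the zooming framework. There is therefore no internal argument to compare your proposal against, and your sketch is essentially an outline of the proof strategy in the cited reference rather than of anything in the present paper.

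Taken on its own terms, your plan correctly identifies the Benedicks--Carleson binding mechanism as the engine of the argument and correctly assigns (C2) and (C3) their roles along the critical orbit, (C1) its role in quantifying the derivative loss at a deep return, and the need to amortize that loss against the exponential gain during the subsequent bound period. Two caveats. First, the ``global book-keeping'' you flag is not a detail to be filled in later; it \emph{is} the proof, and as written your proposal is a statement of intent rather than an argument --- you would need to set up the free/bound decomposition precisely (bound period length defined via a comparison $|f_a^{k+j}(x)-f_a^{j}(\pm 1)|\lesssim e^{-\beta j}$, not merely ``proportional to $-\log|f_a^k(x)|$''), prove the bounded distortion along binding, and then run the Ces\`aro estimates. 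Second, you misattribute the measure-theoretic control of deep returns to (C4): condition (C4) is a topological density statement used for transitivity/exactness, not for excluding anomalous returns on positive-measure sets. The relevant control is already baked into the choice of the parameter set $E$ (the exclusion argument behind (C2)--(C3)) together with the expansion-outside-$B_\delta$ estimate of Ma\~n\'e type; invoking (C4) here is a red herring. With those corrections the outline matches the standard route, but it stops well short of a proof.
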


	In the following, we give definitions for a map on a metric space to have similar behaviour to maps with hyperbolic times and which can be found in \cite{Pi1}.  
	
	Given $M$ a metric spaces and $f: M \to M$, we define for $p \in M$:
	\[
	\displaystyle \mathbb{D}^{-}(p) = \liminf \frac{d(f(x),f(p)}{d(x,p)}
	\]
	Define also,
	\[
	\displaystyle \mathbb{D}^{+}(p) = \limsup \frac{d(f(x),f(p)}{d(x,p)}
	\]
	We will consider points $x \in M$ such that 
	\[
	\displaystyle \limsup_{n \to \infty} \frac{1}{n} \sum_{i=0}^{n-1} \log \mathbb{D}^{-} \circ f^{i}(x) > 0.
	\]  
	The critical set is the set of points $x \in M$ such that $\mathbb{D}^{-}(x) = 0$ or $\mathbb{D}^{+}(x) = \infty$. For  the non-degenerateness we ask that there exist $B, \beta >0$ such that
	
	\begin{itemize}
		
		\item $\frac{1}{B} d(x,\mathcal{C})^{\beta} \leq \mathbb{D}^{-}(x) \leq \mathbb{D}^{+}(x) \leq B d(x,\mathcal{C})^{-\beta}$.
		
		For every $x, y \in M \backslash \mathcal{C}$ with $d(x,y) < d(x,\mathcal{C})/2$ we have
		
		\item $\mid \log \mathbb{D}^{-}(x) - \log \mathbb{D}^{-}(y) \mid \leq \frac{B}{d(x,\mathcal{C})^{\beta}} d(x,y)$.
		
	\end{itemize}
	
	With these conditions we can see that all the consequences for hyperbolic times are valid here and the expanding sets and measures are zooming sets and measures.
	
	\begin{definition}
		We say that a map is \emph{conformal at p} if $\mathbb{D}^{-}(p) = \mathbb{D}^{+}(p)$. So, we define
		\[
		\displaystyle \mathbb{D}(p) = \lim \frac{d(f(x),f(p)}{d(x,p)}.
		\]
	\end{definition}
	
	Now, we give an example of such an open non-uniformly expanding map.  
	
	\subsection{Expanding sets on a metric space} Let $\sigma : \Sigma_{2}^{+} \to \Sigma_{2}^{+}$ be the one-sided shift, with the usual metric:
	\[
	\displaystyle d(x,y) = \sum_{n=1}^{\infty} \frac{\mid x_{n} - y_{n} \mid}{2^{n}},
	\]
	where $x = \{x_{n}\}, y = \{y_{n}\}$. We have that $\sigma$ is a conformal map such that $\mathbb{D}^{-}(x) = 2, \text{for all} \, \, \, x \in \Sigma_{2}^{+}$. Also, every forward invariant set (in particular the whole $\Sigma_{2}^{+}$)  and all invariant measure
	for the shift $\sigma$ are expanding. 
	
	\subsection{Zooming sets on a metric space (not expanding)} Let $\sigma : \Sigma_{2}^{+} \to \Sigma_{2}^{+}$ be the one-sided shift, with the following metric for $\sum_{n=1}^{\infty} b_{n} < \infty$:
	\[
	\displaystyle d(x,y) = \sum_{n=1}^{\infty} b_{n}\mid x_{n} - y_{n} \mid,
	\]
	where $x = \{x_{n}\}, y = \{y_{n}\}$ and $b_{n+k} \leq b_{n}b_{k}$ for all $n,k \geq 1$. By induction, it means that $b_{n} \leq b_{1}^{n}$. Let us suppose that $b_{n} \leq a_{n}:=(n+b)^{-a}, a>1, b>0$ for all $n \geq 1$. 
	
	We claim that $a_{n}$ defines a Lipschitz contraction for the shift map. We require that there exists $n_{0} > 1$ such that $b_{n} > a_{1}^{n} \geq b_{1}^{n}$ for $n \leq n_{0}$. So, the contraction is not exponential. In fact, if $x,y$ belongs to the cylinder $C_{k}$ we have
	\begin{eqnarray*}
		\displaystyle d(x,y) &=& \sum_{n=1}^{\infty} b_{n}\mid x_{n} - y_{n} \mid = \sum_{n=k+1}^{\infty} b_{n}\mid x_{n} - y_{n} \mid = \sum_{n=1}^{\infty} b_{n+k}\mid x_{n+k} - y_{n+k} \mid\\
		&\leq& b_{k} \sum_{n=1}^{\infty} b_{n}\mid x_{n+k} - y_{n+k} \mid = b_{k} d(\sigma^{k}(x),\sigma^{k}(y)) \leq a_{k} d(\sigma^{k}(x),\sigma^{k}(y)).
	\end{eqnarray*}
	It implies that
	\begin{eqnarray*}
		\displaystyle d(\sigma^{i}(x),\sigma^{i}(y)) \leq  a_{k-i} d(\sigma^{k-i}(\sigma^{i}(x)),\sigma^{k-i}(\sigma^{i}(y)))= a_{k-i}d(\sigma^{k}(x),\sigma^{k}(y)), i \leq k.
	\end{eqnarray*}
	It means that the sequence $a_{n}$ defines a Lipschitz contraction, as we claimed.
	
	Every forward invariant set (in particular the whole $\Sigma_{2}^{+}$)  and all invariant measure
	for the shift $\sigma$ are zooming. 
	
	
	\paragraph{\bf Acknowledgement}{The authors would like to thank Artur O. Lopes for pointing out mistakes in the proof in a preliminary version of the text and for other valuable suggestions to improve this work. Also, the authors are thankful to the anonymous referees for contributing to improve the writing.}

\end{document}